\newtheorem*{rep@theorem}{\rep@title}
\newcommand{\newreptheorem}[2]{
\newenvironment{rep#1}[1]{
 \def\rep@title{#2 \ref{##1}}
 \begin{rep@theorem}}
 {\end{rep@theorem}}}
\theoremstyle{plain}
\newtheorem{thm}{Theorem}
\theoremstyle{definition}
\theoremstyle{remark}
\newcommand{\card}[1]{\left|#1\right|}
\newcommand{\DefinedAs}{\mathrel{\mathop:}=}
\title{Yet another proof of Brooks' theorem}
\author{Landon Rabern}
\begin{document}
\maketitle

A referee for \emph{Brooks' Theorem and Beyond} \cite{cranston2014brooks} asked to make this proof available on the arXiv.  This is arguably the simplest variation as we avoid reducing to the cubic case entirely.

\begin{thm}[Brooks 1941 \cite{brooks1941colouring}]
Every graph $G$ with $\chi(G) = \Delta(G) + 1 \ge 4$ contains $K_{\Delta(G) + 1}$.
\end{thm}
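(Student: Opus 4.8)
The plan is a minimal-counterexample argument that reduces to a very rigid graph and then finishes with a Brooks-style colour-extension trick; I expect essentially all of the real work to sit in one connectivity lemma about $2$-cuts. First I would take a counterexample $G$ with $\card{V(G)}$ as small as possible and set $k \DefinedAs \Delta(G) \ge 3$, so $\chi(G) = k+1$ and $K_{k+1} \not\subseteq G$. For any vertex $u$ we have $\Delta(G-u) \le k$ and $\chi(G-u) \le k+1$; if $\chi(G-u) = k+1$ then $\Delta(G-u) = k$ is forced and $G-u$ is either a smaller counterexample or already contains $K_{k+1} \subseteq G$, both impossible, so $\chi(G-u) \le k$ for every $u$. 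The usual consequences then follow: $\delta(G) \ge k$ (otherwise a $k$-colouring of $G-u$ extends greedily to $u$), so $G$ is $k$-regular; and $G$ is connected with no cut vertex (a cut vertex $x$ would let one $k$-colour each block $G[V_i \cup \set{x}]$, which lies inside some $G-u$, and paste along the colour of $x$). Of course $G \ne K_{k+1}$.

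Next I would isolate the colouring step as a lemma: if $G$ contains vertices $a, v, b$ with $va, vb \in E(G)$, $ab \notin E(G)$, and $G - \set{a,b}$ connected, then $\chi(G) \le k$, contradicting $\chi(G) = k+1$. To prove it, root a spanning tree of the connected graph $G - \set{a,b}$ at $v$, enumerate $V(G) \setminus \set{a,b}$ as $u_1, \dots, u_{n-2} = v$ so that each $u_i$ with $i < n-2$ precedes its parent, colour $a$ and $b$ both with colour $1$ (legitimate since $ab \notin E$), and colour $u_1, u_2, \dots$ greedily from $\set{1, \dots, k}$: every $u_i \ne v$ still has its uncoloured parent among its (at most $k$) neighbours, and when we reach $v$ its neighbours $a$ and $b$ share colour $1$, so in both situations at most $k-1$ colours are forbidden and a legal colour remains.

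It remains to locate such a triple. If $G$ is $3$-connected this is immediate: since $G$ is $k$-regular, connected and not complete, some vertex $v$ has two non-adjacent neighbours $a, b$ (otherwise every closed neighbourhood is a clique, which forces $G = K_{k+1}$), and $G - \set{a,b}$ is still connected. So suppose $\kappa(G) = 2$. I would then choose a $2$-cut $\set{x,y}$ together with a component $H$ of $G - \set{x,y}$ so that $\card{V(H)}$ is minimum over all such choices, set $H' \DefinedAs V(G) \setminus (V(H) \cup \set{x,y}) \ne \emptyset$, and use $2$-connectivity to note that each of $x$ and $y$ has a neighbour in $H$ and a neighbour in $H'$. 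The heart of the matter is the claim that $F \DefinedAs G[V(H) \cup \set{x,y}]$ stays connected after deleting any single vertex $w$ of $H$: a component of $F - w$ avoiding both $x$ and $y$ would be cut off from the rest of $G$ by $w$ alone (since $V(H)$ sees nothing outside $F$), contradicting $2$-connectivity, so $F - w$ splits into exactly two components $F_x \ni x$ and $F_y \ni y$; but then in $G - \set{w,y}$ the part $F_x$ reconnects to $H'$ through $x$ while $F_y \setminus \set{y} \subseteq V(H) \setminus \set{w}$ is isolated, so either $\set{w,y}$ is a $2$-cut with a strictly smaller component (contradicting minimality) or $F_y = \set{y}$ --- and the symmetric argument forces $F_x = \set{x}$, whence $V(H) = \set{w}$ and $\deg_G(w) \le 2 < k$, absurd. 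Granting the claim, I would take $v \DefinedAs x$, a neighbour $a \in V(H)$ of $x$, and a neighbour $b \in H'$ of $x$; then $ab \notin E(G)$ (the two lie in different components of $G - \set{x,y}$), and $G - \set{a,b}$ is connected, because $F - a$ is connected and contains both $x$ and $y$, the $b$-side of the cut splits at worst into an $x$-part and a $y$-part by the same reasoning about components meeting $\set{x,y}$, and all these pieces overlap in $\set{x,y}$; now the colour-extension lemma applies.

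I expect the first two steps to be routine, and the extremal choice in the last step to be the real obstacle: the delicate part is verifying that minimising $\card{V(H)}$ genuinely makes $F$ robust to deleting any vertex of $H$, and that the stray two-component cases collapse to the impossible $\deg_G(w) \le 2$ using only $k \ge 3$ --- which is precisely what removes any need to treat the cubic case on its own.
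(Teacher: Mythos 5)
Your proof is correct, but it is a genuinely different argument from the one in the paper: it is the Lov\'asz-style proof. Your work happens at the level of connectivity --- vertex-criticality gives $k$-regularity and $2$-connectedness, the colouring is done once and for all by the spanning-tree greedy lemma (precolour two non-adjacent neighbours $a,b$ of a root $v$ alike, colour towards the root), and the only real labour is producing such a triple when $\kappa(G)=2$, which you handle by taking a $2$-cut $\set{x,y}$ with a minimum-size fragment $H$ and showing $G[V(H)\cup\set{x,y}]-w$ stays connected for every $w\in V(H)$; I checked that argument (every component of the deleted graph must meet $\set{x,y}$, else a single vertex cuts $G$, and the two-component case violates either minimality of $\card{V(H)}$ or $k\ge 3$ via $\deg(w)\le 2$), and the final verification that $G-\set{a,b}$ is connected with $a\in N(x)\cap V(H)$, $b\in N(x)$ on the other side also goes through, since each component of the $b$-side minus $b$ must meet $\set{x,y}$, which lie in the connected $F-a$. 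The paper instead stays entirely at the level of colourings and the induction hypothesis: it deletes a maximal independent set $M$, applies the statement to $G-M$ (which has smaller maximum degree but chromatic number still at least $\Delta$) to extract an induced $T=K_\Delta$ (or an odd cycle when $\Delta=3$), rules out $K_{\Delta+1}$ minus an edge by a direct colouring extension, and then $\Delta$-colours $G-T$ with an added edge $xy$ between two neighbours of $T$ so that the colouring extends into $T$ because two of its vertices see different colours outside. What your route buys is self-containedness and no appeal to the theorem for graphs of smaller maximum degree, at the cost of a connectivity case analysis (your minimal-fragment treatment of the $2$-cut case is a nice substitute for the usual block decomposition of $G-v$); what the paper's route buys is brevity and the complete absence of connectivity arguments, the structure of the near-clique $T$ doing the work that your $2$-cut analysis does. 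Both, as you intended, avoid any separate treatment of the cubic case.
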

\begin{proof}
Suppose the theorem is false and choose a counterexample $G$ minimizing
$\card{G}$.  Put $\Delta \DefinedAs \Delta(G)$. Using minimality of $\card{G}$,
we see that $\chi(G - v) \le \Delta$ for all $v \in
V(G)$. In particular, $G$ is $\Delta$-regular.

Let $M$ be a maximal independent set in $G$.  Since $\Delta(G-M) < \Delta$ and $\chi(G-M) \ge \Delta$, minimality of $|G|$ shows that $G-M$ has an induced subgraph $T$ where $T = K_\Delta$ or $T$ is an odd cycle if $\Delta=3$.   Suppose $G$ contains $K_{\Delta + 1}$ less an edge, say $K_{\Delta + 1} - xy = D \subseteq G$. Then we may $\Delta$-color $G-D$ and extend the coloring to $D$ by first coloring $x$ and $y$ the same and then finishing greedily on the rest.

Since $K_{\Delta + 1} \not \subseteq G$ we have $\card{N(T)} \ge 2$. So, we may take different $x, y \in N(T)$ and put $H \DefinedAs G - T$ if $x$ is adjacent to $y$ and $H \DefinedAs (G-T) + xy$ otherwise.  Then, $H$ doesn't contain $K_{\Delta + 1}$ as $G$ doesn't contain $K_{\Delta + 1}$ less an edge. By minimality of $\card{G}$, $H$ is $\Delta$-colorable. That is, we have a $\Delta$-coloring of $G - T$ where $x$ and $y$ receive different colors.  We can easily extend this partial coloring to all of $G$ since each vertex of $T$ has a set of $\Delta - 1$ available
colors and some pair of vertices in $T$ get different sets.  
\end{proof}

\bibliographystyle{plain}
\bibliography{GraphColoring}
\end{document}